\documentclass{amsart}
\usepackage{amsmath,amsfonts,mathrsfs,enumerate,graphicx,subfig,wasysym,mathtools}
\usepackage{color} 
\newtheorem{theorem}{Theorem}

\newtheorem{proposition}{Proposition}[section]

\DeclareMathOperator{\dist}{dist}

\DeclareMathOperator{\GL}{GL}

\DeclareMathOperator{\diam}{diam}

\newcommand{\triple}[1]{{\left\vert\kern-0.25ex\left\vert\kern-0.25ex\left\vert #1 
    \right\vert\kern-0.25ex\right\vert\kern-0.25ex\right\vert}}

\newcommand{\threebar}[1]{{\left\vert\kern-0.25ex\left\vert\kern-0.25ex\left\vert #1 
    \right\vert\kern-0.25ex\right\vert\kern-0.25ex\right\vert}}

\newcommand{\R}{\mathbb{R}}
\newcommand{\A}{\mathsf{A}}

\newcommand{\Z}{\mathbb{Z}}

\title[Marginal instability of linear cocycles]{A note on the marginal instability rates of two-dimensional linear cocycles}
\author{Ian D. Morris and Jonah Varney}
\address{I. D. Morris: School of Mathematical Sciences, Queen Mary University of London, Mile End Road, London E1 4NS, United Kingdom}
\email{i.morris@qmul.ac.uk }
\address{J. Varney: Mathematics Department, University of Surrey, Guildford GU2 7XH, United Kingdom}
\email{jonahvarney@gmail.com}
\begin{document}

\begin{abstract}
A theorem of Guglielmi and Zennaro implies that if the uniform norm growth of a locally constant $\GL_2(\R)$-cocycle on the full shift  is not exponential then it must be either bounded or linear, with no other possibilities occurring. We give an alternative proof of this result and demonstrate that its conclusions do not hold for Lipschitz continuous cocycles over the full shift on two symbols.\\\\
Keywords: discrete linear inclusion, ergodic optimisation, joint spectral radius, linear cocycle, marginal stability, marginal instability. MSC2020 codes: 37H15 (primary), 37D35, 93C30 (secondary)
\end{abstract}
\maketitle


\section{Introduction and statement of results}

Define $\Sigma_N:=\{1,\ldots,N\}^{\Z}$ and equip this set with the infinite product topology, with respect to which it is a compact metrisable topological space. Define $T \colon \Sigma_N \to \Sigma_N$ to be the shift transformation $T[(x_n)_{n \in \Z}]:=(x_{n+1})_{n \in \Z}$, which is a homeomorphism. If a continuous function $A \colon \Sigma_N \to \GL_d(\R)$ is specified, one may be interested in the growth of the sequence $(a_n)$ defined by
\[a_n:= \sup_{x \in \Sigma_N} \left\|A(T^{n-1}x)\cdots A(T x)A(x)\right\|.\]
This sequence  is easily seen to be \emph{submultiplicative} in the sense that $a_{n+m}\leq a_na_m$ for all $n,m \geq 1$, which guarantees the existence of the limit
\[\varrho(A):=\lim_{n \to \infty} \sup_{x \in \Sigma_N} \left\|A(T^{n-1}x)\cdots A(T x)A(x)\right\|^{\frac{1}{n}}.\]
By replacing $A$ with $\varrho(A)^{-1} \cdot A$ we may without loss of generality assume that $\varrho(A)=1$, and we will make this assumption for the remainder of this note. In this note we will be interested in the behaviour of the sequence $(a_n)$ in the reduced case $\varrho(A)=1$.
 
 Let us say that $A \colon \Sigma_N \to \GL_d(\R)$  is \emph{locally constant} if for $x=(x_n)_{n \in \Z}$ the matrix $A(x)$ is determined by the symbol $x_0$ only. In this case, if $\mathsf{A}$ denotes the range of the function $A$, then one simply has
 \begin{equation}\label{eq:simple}\sup_{x \in \Sigma_N} \left\|A(T^{n-1}x)\cdots A(T x)A(x)\right\|=\sup_{A_1,\ldots,A_n \in \mathsf{A}} \left\|A_n\cdots A_1\right\|.\end{equation} 
The case in which $A$ is locally constant has been studied extensively due to its relevance to marginally unstable discrete-time linear switching systems in control theory, and investigations of sequences $(a_n)$ of the above form may be found in numerous works such as \cite{ChMaSi12,Ju09,JuPrBl08,Mo22,PrJu15,Su08,Va22,VaMo22}. The same problem has also been studied in \cite{BeCoHa14,BeCoHa16} based on quite different motivations relating to the notion of $k$-regular sequences in symbolic dynamics. In the works just cited the simpler formulation \eqref{eq:simple} corresponding to the locally constant case is the only case studied, but the more general case in which $A$ is not assumed locally constant has been touched upon in the ergodic optimisation literature, notably \cite{BoGa19} in which criteria for $(a_n)$ to be a bounded sequence are investigated.

An early result describing some possible behaviours of such sequences $(a_n)$ is the following, which is essentially due to N. Guglielmi and M. Zennaro:
\begin{theorem}\label{th:guze}
Let $A\colon \Sigma_N \to \GL_2(\R)$ be locally constant and define $\A:=\{A(x) \colon x \in \Sigma_N\}$. Suppose that
\[\lim_{n \to \infty} \sup_{x \in \Sigma_N}  \left\|A(T^{n-1}x)\cdots A(x)\right\|^{\frac{1}{n}}=1.\] 
Then one of the following holds: either
\begin{equation}\label{eq:subadd-limit}\lim_{n \to \infty}\frac{1}{n} \sup_{x \in \Sigma_N} \left\|A(T^{n-1}x)\cdots A(x)\right\|>0,\end{equation}
or we instead have
\[\sup_{n \geq 1}  \sup_{x \in \Sigma_N} \left\|A(T^{n-1}x)\cdots A(x)\right\|<\infty.\]
Moreover, the first case occurs if and only if the semigroup generated by $\A$ contains a nontrivial Jordan matrix with unit determinant, if and only if both of the following two conditions are met: $\A$ is simultaneously triangularisable, and the set of matrices in $\A$ with determinant $\pm 1$ is nonempty and is not simultaneously diagonalisable.
\end{theorem}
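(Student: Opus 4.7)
The plan is to proceed by a case analysis on the algebraic structure of $\A$. First, if $\A$ has no common invariant line --- equivalently, if the semigroup it generates acts irreducibly on $\R^2$ --- Barabanov's theorem produces a norm $\triple{\cdot}$ equivalent to the Euclidean norm with $\triple{Av}\leq\triple{v}$ for every $A\in\A$, immediately forcing $\sup_n\sup_x\|A(T^{n-1}x)\cdots A(x)\|<\infty$. A nontrivial unit-determinant Jordan matrix possesses a unique invariant line, so the semigroup can contain no such matrix in this setting, and all three conditions of the theorem hold vacuously. I may therefore henceforth assume that $\A$ is simultaneously triangularisable, and after conjugation write each $A\in\A$ as $\begin{pmatrix}a&b\\0&c\end{pmatrix}$; the fact that the joint spectral radius of an upper-triangular family equals the maximum of its two diagonal spectral radii then gives $\max_A|a|\leq 1$ and $\max_A|c|\leq 1$.

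Expansion of a product yields
\[A_{i_n}\cdots A_{i_1}=\begin{pmatrix}\prod_l a_{i_l}&E_n\\0&\prod_l c_{i_l}\end{pmatrix},\qquad E_n:=\sum_{k=1}^n\Bigl(\prod_{l>k}a_{i_l}\Bigr)\,b_{i_k}\,\Bigl(\prod_{l<k}c_{i_l}\Bigr),\]
so (since the diagonal entries of the product are bounded by $1$) the growth of the cocycle reduces to the growth of $\sup|E_n|$. Set $\A_{\pm 1}:=\{A\in\A:|\det A|=1\}$, which in our upper-triangular form consists exactly of the matrices with $a,c\in\{\pm 1\}$; the main bifurcation is whether $\A_{\pm 1}$ is simultaneously diagonalisable.

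If $\A_{\pm 1}$ is simultaneously diagonalisable (including the case $\A_{\pm 1}=\emptyset$), then because $e_1$ is already a common eigenvector the diagonalisation may be realised by an upper-triangular change of basis, after which every $A\in\A_{\pm 1}$ has $b=0$. Only indices $k$ with $A_{i_k}\in\A\setminus\A_{\pm 1}$ then contribute to $E_n$; each such index satisfies $|a_{i_k}c_{i_k}|<1$, and a bookkeeping argument distributing this strict contraction across the factors $\prod_{l>k}a_{i_l}$ and $\prod_{l<k}c_{i_l}$ will yield $\sup_n\sup|E_n|<\infty$. If instead $\A_{\pm 1}$ is nonempty and not simultaneously diagonalisable, I would show that the semigroup already contains a nontrivial unit-determinant Jordan matrix $J$: either some $A\in\A_{\pm 1}$ itself has $a=c$ and $b\neq 0$ (then $A$ or $A^2$ is such a $J$), or every $A\in\A_{\pm 1}$ is an involution with $a\neq c$, in which case two such involutions with distinct second eigenvectors must exist and a direct $2\times 2$ computation exhibits their product as $\begin{pmatrix}\pm 1&\ast\\0&\pm 1\end{pmatrix}$ with nonzero off-diagonal. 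The powers $J^m$ give $\|J^m\|\asymp m$, hence $M_{dm}\geq cm$ where $d$ is the length of $J$ as a word, while the subadditivity $|E_{n+m}|\leq|E_n|+|E_m|$ obtained from expanding $(B^{(2)}B^{(1)})_{12}=B^{(2)}_{11}B^{(1)}_{12}+B^{(2)}_{12}B^{(1)}_{22}$ using $|B^{(i)}_{jj}|\leq 1$ forces $M_n\leq Cn$, giving exactly linear growth.

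The principal obstacle is the boundedness estimate for $\sup|E_n|$ in the simultaneously diagonalisable case: the sum has $n$ summands whose individual prefactors $\prod_{l>k}|a_{i_l}|\cdot\prod_{l<k}|c_{i_l}|$ can each individually equal $1$, so the strict contraction $|a_{i_k}c_{i_k}|<1$ at non-critical indices has to be extracted uniformly across all admissible words, by splitting each term according to which of the two diagonal products actually carries the decay. Once this combinatorial estimate is in place, the reverse implication --- that a Jordan element in the semigroup forces both triangularisability of $\A$ and non-diagonalisability of $\A_{\pm 1}$ --- follows from a contradiction using the Barabanov-norm boundedness in the irreducible case together with the fact that a product of simultaneously diagonalised matrices is itself diagonalisable.
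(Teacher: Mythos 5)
Your overall architecture — dispose of the irreducible case via a Barabanov norm, conjugate the triangularisable case to upper-triangular form, reduce growth of the cocycle to growth of the off-diagonal sum $E_n$ via the subadditivity $|E_{n+m}|\leq|E_n|+|E_m|$, and then bifurcate on whether $\A_{\pm1}$ is simultaneously diagonalisable — is exactly the paper's strategy, and the linear-growth branch (producing a nontrivial unit-determinant Jordan matrix in the semigroup and using its powers for the lower bound, subadditivity for the upper bound and for existence of the limit) is sound.

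The genuine gap is precisely the one you flag yourself: the boundedness of $\sup_n\sup|E_n|$ when $\A_{\pm1}$ is simultaneously diagonalisable. Your ``bookkeeping argument distributing this strict contraction across the factors'' does not obviously close. The difficulty is that the summand at position $k$ has prefactor $\prod_{l>k}|a_{i_l}|\cdot\prod_{l<k}|c_{i_l}|$, which involves only one of the two diagonal entries of each matrix at positions $l\neq k$, so the contraction $|a_{i_l}c_{i_l}|<1$ is never fully ``cashed in'' inside a single summand; attempting to extract decay term-by-term or by pairing terms naively only gives estimates like $\sum_k p_k=O(\sqrt n)$, not $O(1)$. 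What actually works (the paper's Proposition~\ref{pr:ymm}) is a rearrangement argument: dominate every matrix in $\A_0$ entrywise by one of two fixed matrices
\[B_1=\begin{pmatrix}1&M\\0&\beta\end{pmatrix},\qquad B_2=\begin{pmatrix}\beta&M\\0&1\end{pmatrix},\qquad 0\leq\beta<1,\]
observe that the commutator $B_1B_2-B_2B_1=\begin{pmatrix}0&2(1-\beta)M\\0&0\end{pmatrix}$ is entrywise non-negative, and deduce that among all products of a given length the entrywise-maximal one is $B_1^mB_2^{n-m}$, which is bounded because each of $B_1,B_2$ is individually power-bounded. This monotone-rearrangement step is the substance missing from your proposal; without it, or some equally concrete substitute, the ``bounded'' branch of the dichotomy is unproved.

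Two smaller remarks. First, rather than diagonalising $\A_{\pm1}$ by a further upper-triangular change of basis, the paper absorbs the (finite) group $\{I,-I,X,-X\}$ into $\A_0$ by forming $\hat{\A}=\A_0\cup\{BA\}\cup\{AB\}$, which then satisfies the hypotheses of the boundedness lemma directly; this avoids worrying about how the conjugation distorts $\A_0$. Second, your justification of the converse direction invokes ``a product of simultaneously diagonalised matrices is itself diagonalisable,'' which does not apply as stated since the $\A_0$ matrices are not diagonal; the cleaner argument, and the one the paper uses, is simply that boundedness of the semigroup is incompatible with it containing a Jordan block of unit determinant, whose powers are unbounded.
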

We remark that the situation described in Theorem \ref{th:guze} is quite delicate: if the dimension of the linear maps is raised from $2$ to $3$, or if a shift over a compact infinite alphabet is allowed in place of the finite alphabet $\{1,\ldots,N\}$, then the conclusion no longer holds and the above sequences may grow at a rate strictly intermediate between linear growth and boundedness  (see \cite{GuZe01,Mo22,PrJu15}). In this article we give an alternative proof of the above result which is due to the second named author and which was previously presented in the thesis \cite{Va22}.  We remark that the actual existence of the limit \eqref{eq:subadd-limit} is a new contribution originating in this article: in \cite{GuZe01,Va22} it was shown that the limit inferior and limit superior of this sequence are finite and nonzero, but it was not shown that they are equal to one another.

The second contribution of this article is to show that if the condition of being locally constant is relaxed then the dichotomy asserted in Theorem \ref{th:guze} ceases to hold. We prove:
\begin{theorem}\label{th:main-2}
Let $T \colon \Sigma_2 \to \Sigma_2$ be the full shift on two symbols and let $d$ be any metric which generates the infinite product topology on $\Sigma_2$. Then there exist Lipschitz continuous functions $f,g \colon \Sigma_2 \to (0,1]$ and $\phi \colon \Sigma_2 \to \R$ such that the  function $A \colon \Sigma_2 \to \GL_2(\R)$ defined by
\[A(x):=\begin{pmatrix}
f(x)&\phi(x) \\
0&g(x)
\end{pmatrix}\]
satisfies
\[\lim_{n \to \infty}  \sup_{x \in \Sigma_2} \left\|A(T^{n-1}x)\cdots A(x)\right\|^{\frac{1}{n}} =1,\]
\[\lim_{n \to \infty} \frac{1}{n} \sup_{x \in \Sigma_2} \left\|A(T^{n-1}x)\cdots A(x)\right\| =0\]
and
\[\sup_{n \geq 1} \sup_{x \in \Sigma_2} \left\|A(T^{n-1}x)\cdots A(x)\right\| =\infty.\]
\end{theorem}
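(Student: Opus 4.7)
My plan is to construct $f, g, \phi$ explicitly and verify the three limit conditions by reducing them to properties of a single scalar quantity. First, I would observe that since $f, g$ take values in $(0,1]$, the diagonal entries of the upper triangular product $A(T^{n-1}x)\cdots A(x)$ automatically lie in $(0,1]$, so the matrix norm is equivalent, up to a universal multiplicative constant, to $1+|S_n(x)|$, where
\[S_n(x)=\sum_{k=0}^{n-1}\left(\prod_{j=k+1}^{n-1}f(T^jx)\right)\phi(T^kx)\left(\prod_{j=0}^{k-1}g(T^jx)\right)\]
is the off-diagonal entry. The three required properties become: $\sup_x|S_n(x)|^{1/n}\to 1$, $\sup_x|S_n(x)|=o(n)$, and $\sup_x|S_n(x)|\to\infty$.

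My second step would be to anchor the construction at a distinguished $T$-invariant set where the diagonal entries do not contract. The simplest candidate is the fixed point $\bar 1=(\ldots 111\ldots)\in\Sigma_2$: I would define $f$ and $g$ Lipschitz with $f(\bar 1)=g(\bar 1)=1$ and $f(x),g(x)<1$ for $x\ne\bar 1$, e.g.\ $f(x)=g(x)=1-c\,d(x,\bar 1)$. This forces every ergodic average of $\log f$ and $\log g$ to be nonpositive with equality only at $\delta_{\bar 1}$; combined with the subexponential growth of $|S_n|$ (which is automatic since $|\phi|$ is bounded and $S_n$ is a sum of at most $n$ bounded terms), this yields $\varrho(A)=1$. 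To avoid linear growth at the fixed point itself one must impose $\phi(\bar 1)=0$, and I would take $\phi(x)=c'\,d(x,\bar 1)$ or a suitably shaped Lipschitz function vanishing on $\bar 1$.

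The substance of the proof would then be to show unboundedness of $\sup_x|S_n(x)|$ while preserving sublinearity. Unboundedness is witnessed by a sequence of "long excursion" orbits $x^{(m)}$ whose forward trajectories $\{T^kx^{(m)}\}_{k=0}^{\tau_m-1}$ hover within a prescribed small neighbourhood of $\bar 1$ for a time $\tau_m\to\infty$, during which both weight products remain uniformly bounded below while the $\phi$-values accumulate. Sublinearity, on the other hand, follows from a variational-principle-type argument: one writes $|S_n(x)|/n$ as a weighted Birkhoff average and bounds its $\limsup$ by $\sup_\mu \int\Psi\,d\mu$ for an appropriate auxiliary $\Psi$ built from $\phi$ and the weights; since $\phi(\bar 1)=0$ and $\delta_{\bar 1}$ is the only invariant measure whose weight product is not eventually exponentially small, this supremum vanishes.

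The hard part will be matching these two requirements under the Lipschitz constraint: Lipschitz continuity forces the magnitude of $\phi$ near $\bar 1$ to be proportional to the distance, which places a ceiling on what can accumulate during an excursion of a given length, while the same constraint forces $f$ and $g$ to decay at a prescribed rate from $1$, which places a floor on how fast the weight products $F_n, G_n$ can drop. The tuning of the rates $\tau_m$, the excursion depths, and the Lipschitz constants $c, c'$ must be balanced against each other so that the long-excursion orbits realise $|S_{\tau_m}(x^{(m)})|\to\infty$ without any single orbit producing linear growth. Should anchoring at a fixed point prove too rigid, a fallback is to anchor at an aperiodic minimal subshift $K\subset \Sigma_2$, where the Liv\v sic-type obstruction that stymies a purely scalar Birkhoff-sum version of the problem no longer applies, and on which Lipschitz observables can genuinely exhibit unbounded but sublinear Birkhoff sums.
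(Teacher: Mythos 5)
Your primary plan --- anchoring at the fixed point $\bar 1$ --- cannot work, and the obstruction is exactly the Lipschitz constraint you flag as "the hard part." If $\phi$ is Lipschitz with $\phi(\bar 1)=0$ and $g(x)=1-c\,d(x,\bar 1)$, then for any orbit whose $k$-th iterate is at distance $\delta_k$ from $\bar 1$ the $k$-th summand of $S_n$ is bounded by $c'\delta_k\prod_{j<k}(1-c\delta_j)\le c'\delta_k e^{-c\sum_{j<k}\delta_j}$. Writing $D_k=\sum_{j<k}\delta_j$ this is $\tfrac{c'}{c}(cD_{k+1}-cD_k)e^{-cD_k}$, and a comparison with $\int_0^\infty e^{-u}\,du$ shows that the whole sum is uniformly bounded by a constant depending only on $c,c'$ and the hovering depth, \emph{independently of the excursion length}. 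So no sequence of "long excursion" orbits near a fixed point can produce $\sup_x|S_n(x)|\to\infty$; the Lipschitz vanishing of $\phi$ at $\bar 1$ and the Lipschitz decay of $f,g$ conspire to telescope everything. The same telescoping argument rules out anchoring at any finite periodic orbit.

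Your fallback --- anchor at an aperiodic minimal subshift $K$ --- is the right direction and is what the paper does, but as stated it still has a gap: it is not true that on an arbitrary aperiodic minimal subshift a Lipschitz observable of zero mean must have unbounded Birkhoff sums. For instance on a Sturmian subshift with $\phi=\chi_{[1]}-\nu([1])$ the Denjoy--Koksma inequality gives uniformly bounded Birkhoff sums, so that choice fails. You need two further properties of $K$: \emph{unique ergodicity} (so the variational argument for sublinearity sees only one maximising measure), and an obstruction ensuring $\phi|_K$ is \emph{not} an $L^\infty$-coboundary. The paper obtains both by taking $K$ uniquely ergodic and \emph{weak-mixing} (via Jewett--Krieger or a direct construction), setting $f=g=e^{-\dist(\cdot,K)}$ and $\phi=\chi_{[1]}-\nu([1])$, and then running a Hal\'asz-type argument: if the Birkhoff sums of $\phi$ on $K$ were bounded, $e^{2\pi i\psi}$ with $\psi=\limsup_n\sum_{k<n}\phi\circ T^k$ would be an $L^2$-eigenfunction with eigenvalue $e^{2\pi i\nu([1])}\ne 1$, contradicting weak mixing. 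Your sketch of sublinearity via a variational principle matches the paper's Theorem \ref{th:tech}, but the unboundedness half of your argument is missing precisely the weak-mixing hypothesis and the Hal\'asz mechanism that make it go through.
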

We emphasise that the metric $d$ is not assumed to have any properties other than generating the usual topology on $\Sigma_2$. When working with shift spaces it is usual to consider metrics on $\Sigma_2$ such that 
\[\max_{y \in \Sigma_2} \diam \left\{(x_n)_{n \in \Z} \in \Sigma_2 \colon x_i=y_i \text{ for all }i\text{ such that }|i|\leq n\right\} =O(\theta^n)\]
for some $\theta \in (0,1)$, but in Theorem \ref{th:main-2} this sequence may be allowed to tend to zero arbitrarily slowly or quickly. The functions $f, g, \phi$ may therefore be freely taken to be ``super-continuous'' in the sense of \cite{BoZh16,QuSi12}.

The proof of Theorem \ref{th:guze} is direct, and proceeds by considering the semigroup generated by the set $\{A(x)\colon x\in \Sigma_N\}$. The proof of Theorem \ref{th:main-2} is more technically subtle and makes use of ergodic optimisation. The two proofs are presented in sections \ref{se:two} and \ref{se:three} below.

\section{Proof of Theorem \ref{th:guze}}\label{se:two}

In view of the identity \eqref{eq:simple} it is sufficent to prove the following: if $\A$ is a finite set of real $2\times 2$ matrices which satisfies
\[\lim_{n \to \infty}  \max_{A_1,\ldots,A_n \in \A} \left\|A_n\cdots A_1\right\|^{\frac{1}{n}} =1,\]
then the limit
\[\lim_{n \to \infty}  \frac{1}{n}\max_{A_1,\ldots,A_n \in \A} \left\|A_n\cdots A_1\right\|\]
exists; if $\A$ is simultaneously upper triangularisable and the set $\{A\in \A \colon |\det A|=1\}$ is not simultaneously diagonalisable, then the above limit is nonzero; and if the two conditions just mentioned do not both hold, then the semigroup generated by $\A$ is bounded. 

We will begin the proof by establishing the boundedness of the semigroup generated by $\A$ in a certain special case. The following result is closely related to \cite[Lemma 5.1]{GuZe01} but its proof is entirely different:
\begin{proposition}\label{pr:ymm}
Let $\mathsf{A}$ be any finite set of $2\times 2$ real upper triangular matrices all of which have spectral radius at most $1$ and have determinant strictly less than $1$ in absolute value. Then the semigroup generated by $\mathsf{A}$ is bounded.
\end{proposition}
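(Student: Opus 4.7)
My plan is to reduce the matrix problem to a scalar combinatorial inequality and then bound that via a block decomposition. Writing each $A\in\mathsf{A}$ as $\begin{pmatrix}\lambda&\mu\\0&\nu\end{pmatrix}$, the product $A_n\cdots A_1$ is upper triangular with diagonal entries $\prod_i\lambda_i$ and $\prod_i\nu_i$, both of modulus at most $1$ by the spectral radius hypothesis. A direct induction on the off-diagonal recurrence $\Phi_{n+1}=\lambda_{n+1}\Phi_n+\mu_{n+1}\prod_{j\leq n}\nu_j$ gives
\[
\Phi_n \;=\; \sum_{k=1}^n \mu_k \Bigl(\prod_{j>k}\lambda_j\Bigr)\Bigl(\prod_{j<k}\nu_j\Bigr).
\]
Finiteness of $\mathsf{A}$ yields a common $c<1$ with $|\lambda_i\nu_i|\leq c$ and uniformly bounded $|\mu_i|$, so boundedness of the semigroup follows once one proves that for any sequences $a_j,b_j\in[0,1]$ with $a_jb_j\leq c$, the quantity
\[
S_n \;:=\; \sum_{k=1}^n \Bigl(\prod_{j>k} a_j\Bigr)\Bigl(\prod_{j<k} b_j\Bigr)
\]
is bounded uniformly in $n$.

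The first step in bounding $S_n$ is an extremality argument: $S_n$ is affine in each pair $(a_j,b_j)$ with non-negative coefficients, and a short calculation on the region $\{(a,b)\in[0,1]^2:ab\leq c\}$ shows that the maximum of such a functional is always attained with $(a_j,b_j)\in\{(1,c),(c,1)\}$. Encoding this choice by a bit $\epsilon_j\in\{0,1\}$, the problem reduces to bounding $\sum_{k=1}^n c^{E(k)}$ for arbitrary $\vec\epsilon\in\{0,1\}^n$, where
\[
E(k)\;:=\;|\{j>k:\epsilon_j=1\}|+|\{j<k:\epsilon_j=0\}|
\]
counts the symbols of $\vec\epsilon$ lying on the ``wrong side'' of $k$.

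The final step is a block decomposition, which I expect to be the main technical effort. Decomposing $\{1,\ldots,n\}$ into its maximal constant-$\epsilon$ blocks, one checks that within each block $E$ is monotone with unit steps while remaining unchanged across every block transition; hence each block $B$ contributes at most $c^{m_B}/(1-c)$ to $\sum_k c^{E(k)}$, where $m_B:=\min_{k\in B}E(k)$. A short local analysis of the two transition types shows that at every $1\to 0$ transition the minima of the two adjacent blocks are equal, so the $m_B$'s pair up into a sequence $\mu_1,\mu_2,\ldots$ Reading off $E$ at the end of the $j$-th $1$-block and using that every block has length at least one yields the uniform lower bound $\mu_j\geq R-1$, with $R$ the total number of $1$-blocks; this gives $\sum_B c^{m_B}\leq 2Rc^{R-1}$, which is bounded on $\mathbb{Z}_{\geq 1}$. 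The bookkeeping distinction between sequences beginning or ending with different symbols can be handled uniformly by noting that $\sum_k c^{E(k)}$ is invariant under the involution $\epsilon_k\mapsto 1-\epsilon_{n+1-k}$.
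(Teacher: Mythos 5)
Your proof is correct, and it takes a genuinely different route from the paper after a shared first step. Both arguments begin by reducing, via the nonnegativity of the relevant quantities, to the extremal ``two-letter'' case: in your notation $(a_j,b_j)\in\{(1,c),(c,1)\}$, which corresponds exactly to the paper's reduction to products of the two dominating matrices $B_1=\bigl(\begin{smallmatrix}1&M\\0&\beta\end{smallmatrix}\bigr)$, $B_2=\bigl(\begin{smallmatrix}\beta&M\\0&1\end{smallmatrix}\bigr)$ in the entrywise-monotone norm $\threebar{\cdot}_1$. From there the strategies diverge. The paper uses a \emph{rearrangement} argument: the commutator $B_1B_2-B_2B_1$ is entrywise nonnegative and nonzero, so replacing any factor $B_2B_1$ with $B_1B_2$ strictly increases $\threebar{\cdot}_1$; hence the maximum over length-$n$ words is attained at $B_1^mB_2^{n-m}$, and this is then bounded by $\sup_k\threebar{B_1^k}_1\cdot\sup_l\threebar{B_2^l}_1$, both finite since $B_1,B_2$ are diagonalisable with spectral radius $1$. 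You instead bound \emph{every} word directly: writing the off-diagonal entry as $\sum_k c^{E(k)}$ and decomposing $\{1,\dots,n\}$ into maximal constant blocks, you observe $E$ is unit-step monotone within a block and constant across transitions, pair up blocks at the $1\to0$ transitions, and read off the uniform bound $m_B\geq R-1$, giving $\sum_k c^{E(k)}=O(Rc^{R-1}/(1-c))$. The paper's route is shorter because it identifies the extremal word rather than estimating all words; your route is more explicit, requires no norm-monotonicity trick, and produces a quantitative bound in terms of the block structure. Two small bookkeeping remarks: the total number of blocks is at most $2R+1$ rather than $2R$ (there may be an initial and a terminal $0$-block), though this does not affect boundedness; and the case $R=0$ (all zeros) needs a one-line separate check, where $E(k)=k-1$ gives the bound $1/(1-c)$ directly.
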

\begin{proof}
Clearly we may freely assume that $\mathsf{A}$ is nonempty. Since every element of $\mathsf{A}$ has spectral radius at most $1$, its diagonal entries are at most $1$ in absolute value. By the finiteness of $\mathsf{A}$ there exist $\beta \in [0,1)$ and $M\geq 0$ with the following property: for every $A \in \mathsf{A}$, one of the every diagonal entries of $A$ is at most $1$ in absolute value, the other diagonal entry is at most $\beta$ in absolute value, and the upper-right entry is at most $M$ in absolute value. (In particular we may take $\beta:=\max_{A \in \A} |\det A|^{1/2}$ and $M:=\max_{A\in \A}\|A\|$.) Let $\threebar{\cdot}_1$ denote the norm on $2 \times 2$ real matrices given by the sum of the absolute values of the matrix entries. If $A_1,\ldots,A_n$ are arbitrary real $2\times 2$ matrices, $A_1',\ldots,A_n'$ are non-negative $2\times 2$ matrices, and for every $i=1,\ldots,n$ every entry of $A_i'$ is  greater than or equal to the absolute value of the corresponding entry of $A_i$, then it is easily seen that
\[\threebar{A_n\cdots A_1}_1 \leq \threebar{A_n'\cdots A_1'}_1.\]
(This fact is particularly easily demonstrated in the context of upper-triangular matrices, which is the only case which we shall need.) Consequently, in order to demonstrate that
\[\sup_{n \geq 1} \max_{A_1,\ldots,A_n \in \mathsf{A}} \threebar{A_n\cdots A_1}_1<\infty\]
it is sufficient to demonstrate that
\begin{equation}\label{eq:bee-bound}\sup_{n \geq 1} \max_{i_1,\ldots,i_n \in \{1,2\}} \threebar{B_{i_n}\cdots B_{i_1}}_1<\infty\end{equation}
where
\[B_1:=\begin{pmatrix} 1&M\\0&\beta\end{pmatrix},\qquad B_2:=\begin{pmatrix} \beta &M\\0&1\end{pmatrix},\]
since every $A \in \mathsf{A}$ either has the absolute values of all of its entries less than or equal to the corresponding entry of $B_1$, or has the same property with respect to the matrix $B_2$. 

We therefore demonstrate \eqref{eq:bee-bound}. Fix an arbitrary $n \geq 1$ and consider a product $B_{i_n}\cdots B_{i_1}$ of the matrices $B_1$, $B_2$ which maximises the value of $\threebar{B_{i_n}\cdots B_{i_1}}_1$ among all products of $B_1$ and $B_2$ of length $n$. We claim that necessarily $B_{i_n}\cdots B_{i_1}=B_1^m B_2^{n-m}$ for some integer $m \in \{0,1,\ldots,n\}$. Suppose for a contradiction that this is not the case: then $B_{i_n}\cdots B_{i_1}$ may be factorised as
\[B_{i_n}\cdots B_{i_1}=B_{i_n}\cdots B_{i_{k+1}}B_2B_1B_{i_{k-1}}\cdots B_{i_1}= X_1 B_2B_1X_2,\]
say, where $X_1,X_2$ are invertible non-negative upper triangular matrices (one or both of which might be the identity matrix). Now, the matrix
\[Z:=B_1B_2-B_2B_1=\begin{pmatrix} 0&2(1-\beta)M\\0&0\end{pmatrix}\]
is non-negative and nonzero, so $X_1ZX_2$ is also non-negative and nonzero. It follows that
\[\threebar{X_1B_1B_2X_2}_1 =\threebar{X_1B_2B_1X_2 + X_1 ZX_2}_1>\threebar{X_1B_2B_1X_2}_1\]
using the non-negativity of all of these matrices and the fact that $X_1ZX_2$ is nonzero. Since $X_1B_1B_2X_2$ also has the form $B_{j_n}\cdots B_{j_1}$ for some $j_1,\ldots,j_n \in \{1,2\}$ this contradicts the presumed maximality of $\threebar{B_{i_n}\cdots B_{i_1}}_1$. The claim is proved. Since for every $n \geq 1$
\[\max_{i_1,\ldots,i_n \in \{1,2\}} \threebar{B_{i_n}\cdots B_{i_1}}_1 = \max_{0 \leq m \leq n} \threebar{B_1^m B_2^{n-m}}_1\]
by the preceding claim, it follows that
\begin{align*}\sup_{n \geq 1} \max_{i_1,\ldots,i_n \in \{1,2\}} \threebar{B_{i_n}\cdots B_{i_1}}_1 &\leq \sup_{n,m \geq 0} \threebar{B_1^n B_2^m}_1\\
&\leq \left(\sup_{n \geq 0}\threebar{B_1^n}_1\right)\left(\sup_{m \geq 0}\threebar{B_2^m}_1\right).\end{align*}
Since $B_1$ and $B_2$ are diagonalisable with spectral radius $1$, the sets $\{\threebar{B_1^n}_1 \colon n\geq 0\}$ and $\{\threebar{B_2^m}_1\colon n\geq 0\}$ are bounded, and the result follows.
\end{proof}

We now prove Theorem \ref{th:guze} in the form described at the beginning of the section. Suppose first that $\A$ is not simultaneously upper triangularisable. This is equivalent to the statement that there does not exist a basis for $\R^2$ whose first element is an eigenvector for every $A \in \A$, and this in turn is equivalent to the statement that no one-dimensional vector subspace of $\R^2$ is preserved by every element of $\A$. It is by now well established (see e.g. \cite[Theorem 2.2]{Ju09}) that under this last condition there necessarily exists a norm $\threebar{\cdot}$ on $\R^2$ such that $\max_{A \in \A}\threebar{Av}=\threebar{v}$ for every $A \in \A$, in which case in particular $\max_{A \in \A} \threebar{A}\leq1$ in the associated operator norm. This clearly implies that $\threebar{A}\leq 1$ for every $A$ in the semigroup generated by $\A$ and the result follows in this case. The existence of the limit \eqref{eq:subadd-limit} in this case is trivial.

For the remainder of the proof we may assume that $\A$ is simultaneously upper triangularisable. By an orthogonal change of basis for $\R^2$ we may assume without loss of generality (and without affecting either the existence or the value of the desired limit \eqref{eq:subadd-limit}) that every $A \in \mathsf{A}$ is upper triangular. If any $B \in \mathsf{A}$ had spectral radius strictly greater than $1$ then we would have
\[1=\lim_{n \to \infty} \max_{A_1,\ldots,A_n \in \mathsf{A}} \|A_n\cdots A_1\|^{\frac{1}{n}} \geq \lim_{n \to \infty} \|B^n\|^{\frac{1}{n}}>1\]
which is a contradiction, so every element of $\mathsf{A}$ has spectral radius at most $1$ and therefore the absolute values of its diagonal entries are at most $1$. The latter property is clearly inherited by all products of elements of $\mathsf{A}$. 

We now wish to show that the limit
\begin{equation}\label{eq:that-limit}\lim_{n \to \infty} \frac{1}{n}\max_{A_1,\ldots,A_n\in \A} \|A_n\cdots A_1\| \geq 0\end{equation}
exists. Define a seminorm $|\cdot|$ on the vector space of real $2\times 2$ matrices by defining $|A|$ to be the absolute value of the upper-right entry of $A$. An easy direct calculation demonstrates that if $A$ and $B$ are $2 \times 2$ upper-triangular matrices whose diagonal entries have absolute value at most $1$, then $|AB| \leq |A|+|B|$. It follows directly that the sequence
\[n\mapsto \max_{A_1,\ldots,A_n \in \mathsf{A}} |A_n\cdots A_1|\]
is subadditive and therefore the limit
\[\lim_{n \to \infty} \frac{1}{n}\max_{A_1,\ldots,A_n\in \A} |A_n\cdots A_1| \geq 0\]
exists. On the other hand, for every product 
\[A_n\cdots A_1=\begin{pmatrix} a&b\\ 0&c\end{pmatrix}\]
of elements of $\mathsf{A}$ we clearly have
\[ |b| \leq \left\|\begin{pmatrix}a&b\\0&c\end{pmatrix}\right\|\leq \left\|\begin{pmatrix} a&0\\0&c\end{pmatrix}\right\| + \left\|\begin{pmatrix} 0&b\\0&0\end{pmatrix}\right\|=\max\{|a|,|c|\}+|b|\leq 1+|b|\]
and therefore
\[\max_{A_1,\ldots,A_n\in \A} |A_n\cdots A_1| \leq \max_{A_1,\ldots,A_n\in\A} \|A_n\cdots A_1\|\leq 1+\max_{A_1,\ldots,A_n\in \A} |A_n\cdots A_1|\]
for every $n \geq 1$. We conclude that
\[\lim_{n \to \infty} \frac{1}{n}\max_{A_1,\ldots,A_n\in\A} \|A_n\cdots A_1\|=\lim_{n \to \infty} \frac{1}{n}\max_{A_1,\ldots,A_n\in\A} |A_n\cdots A_1|\]
and in particular the former limit exists as required.

We now demonstrate that either the limit \eqref{eq:that-limit} is positive, or the semigroup generated by $\mathsf{A}$ is bounded. Define
\[\mathsf{A}_0:=\{A \in \mathsf{A} \colon |\det A|<1\},\]
\[\mathsf{A}_1:=\{A \in \mathsf{A}\colon |\det A|=1\}.\]
We observe that every matrix in $\mathsf{A}_1$ is of one of the following three types: either it is equal to plus or minus the identity matrix; or it is a nontrivial Jordan matrix with determinant $1$; or it is an upper triangular matrix with determinant $-1$. If $\mathsf{A}_1$ contains a matrix $B$ of the second type then clearly
\[\max_{A_1,\ldots,A_n \in \mathsf{A}} \|A_n\cdots A_1\| \geq \|B^n\| \geq n|B|>0\]
for every $n \geq 1$. If it contains two matrices of the third type which are not scalar multiples of one another then it is easy to check that their product is a nontrivial Jordan matrix $B$, and therefore for all $n \geq1$
\[\max_{A_1,\ldots,A_{2n} \in \mathsf{A}} \|A_{2n}\cdots A_1\| \geq \|B^n\| \geq n|B|>0.\]
In either of these two cases it is obvious that $\mathsf{A}_1$ is not simultaneously diagonalisable and that the limit \eqref{eq:that-limit} is positive. If neither of these cases holds then for some upper triangular matrix $X \in \GL_2(\R)$ which has determinant $-1$ and diagonal entries $\pm 1$, we have $\mathsf{A}_1 \subseteq \{I, -I, X, -X\}$. It is clear that in this case $\mathsf{A}_1$ is simultaneously diagonalisable. Let $\mathsf{X}:=\{I, -I, X, -X\}$. By the Cayley-Hamilton Theorem $X^2=I$ and it follows easily that $\{I, -I, X, -X\}$ is a group. Consequently every element of the semigroup generated by $\mathsf{A}$ either is an element of $\mathsf{X}$ or is contained in the semigroup generated by the set
\[\hat{\mathsf{A}}:=\mathsf{A}_0 \cup \left\{BA \colon B \in \mathsf{X}\text{ and }A \in \mathsf{A}_0\right\}\cup \left\{AB \colon B \in \mathsf{X}\text{ and }A \in \mathsf{A}_0\right\}.\]
But $\hat{\mathsf{A}}$ satisfies the hypotheses of Proposition \ref{pr:ymm}, so the semigroup which it generates is bounded. Thus the semigroup generated by $\mathsf{A}$ is bounded. We observe that unboundedness held precisely in those cases in which the semigroup generated by $\A$ contained a nontrivial Jordan matrix with unit determinant. The proof is complete.

\section{Proof of Theorem \ref{th:main-2}}\label{se:three}

\subsection{Ergodic-theory preliminaries and a technical result}

We will deduce Theorem \ref{th:main-2} from a more general ergodic-theoretic statement. We begin with some necessary definitions. If $X$ is a compact metrisable topological space then we let $\mathcal{M}(X)$ denote the set of all Borel probability measures on $X$. By the Riesz Representation Theorem we may identify $\mathcal{M}(X)$ with the set of all non-negative elements of the unit sphere of $C(X)^*$, and we equip $\mathcal{M}(X)$ with the topology which it inherits as a subspace of $C(X)^*$ in its weak-* topology via this identification. This topology makes $\mathcal{M}(X)$ compact and metrisable, and in this topology the function $\mu \mapsto \int \phi\,d\mu$ is continuous for every $\phi \in C(X)$. 

If $T\colon X\to X$ is a continuous transformation of a compact metric space then we let $\mathcal{M}_T(X)\subseteq \mathcal{M}(X)$ denote the set of all $T$-invariant Borel probability measures on $X$ and $\mathcal{E}_T(X)\subseteq\mathcal{M}_T(X)$ the set of all such measures with respect to which $T$ is ergodic. Both sets are nonempty and the former is closed in the weak-* topology on $\mathcal{M}(X)$, hence is also a compact metrisable topological space. If $\phi \colon X\to\R$ is continuous we define $\beta(\phi):=\sup_{\mu \in \mathcal{M}_T(X)} \int\phi\,d\mu$. We also define $\mathcal{M}_{\max}(\phi)$ to be the set of all $\mu \in \mathcal{M}_T(X)$ which attain this supremum, i.e. which satisfy $\int \phi\,d\mu=\beta(\phi)$. The set $\mathcal{M}_{\max}(\phi)$ is nonempty by elementary considerations of compactness and continuity, and moreover has nonempty intersection with $\mathcal{E}_T(X)$ (see for example \cite[Proposition 2.4]{Je06}).

The following result will be applied to prove Theorem \ref{th:main-2}:
\begin{theorem}\label{th:tech}
Let $T \colon X \to X$ be a homeomorphism of a compact metric space. Let $f,g  \colon \Sigma \to (0,1]$ be continuous, let $\phi \colon \Sigma \to \R$ be continuous, and suppose that $ \beta(\log f)=\beta(\log g)=0$.  For every $x \in X$ define
\[A(x):=\begin{pmatrix} f(x) & \phi(x)\\ 0&g(x)\end{pmatrix} \in \GL_2(\R).\]
If $\mathcal{M}_{\max}(\log f)\cap\mathcal{M}_{\max}(\log g)\neq \emptyset$ then
\[\lim_{n \to \infty} \frac{1}{n} \sup_{x \in X}\left\| A(T^{n-1}x)\cdots A(x)\right\|=\sup_{\mu \in \mathcal{M}_{\max}(\log f)\cap\mathcal{M}_{\max}(\log g)} \left|\int \phi\,d\mu\right|\geq 0\]
and if $\mathcal{M}_{\max}(\log f)\cap\mathcal{M}_{\max}(\log g)= \emptyset$ then
\[\lim_{n \to \infty} \frac{1}{n} \sup_{x \in X}\left\| A(T^{n-1}x)\cdots A(x)\right\|=0.\]
\end{theorem}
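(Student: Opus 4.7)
The plan is to reduce the operator-norm growth to that of the off-diagonal entry of the matrix product, establish a subadditivity property, and then invoke a variational principle for continuous subadditive cocycles.

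First, by a direct induction on products of upper triangular matrices, one verifies that $A(T^{n-1}x)\cdots A(x)$ has diagonal entries $\prod_{k=0}^{n-1}f(T^k x)$ and $\prod_{k=0}^{n-1}g(T^k x)$ (both $\leq 1$ since $f,g\leq 1$) and upper-right entry
\[\Phi_n(x) := \sum_{k=0}^{n-1}\phi(T^k x)\exp\Bigl(S_k\log g(x) + S_{n-1-k}\log f(T^{k+1}x)\Bigr),\]
where $S_m F(y) := \sum_{j=0}^{m-1} F(T^j y)$ denotes the Birkhoff sum. Evaluating the matrix on the coordinate vectors and estimating directly yields $|\Phi_n(x)|\leq\|A(T^{n-1}x)\cdots A(x)\|\leq 2+|\Phi_n(x)|$, so it suffices to compute $\lim_n n^{-1}\sup_x|\Phi_n(x)|$. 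The block decomposition of the matrix product then gives the identity $\Phi_{n+m}(x) = \exp(S_m\log f(T^n x))\Phi_n(x) + \Phi_m(T^n x)\exp(S_n\log g(x))$, whence the cocycle inequality $|\Phi_{n+m}(x)|\leq|\Phi_n(x)|+|\Phi_m(T^n x)|$ since both exponential factors lie in $(0,1]$. Thus $|\Phi_n|$ is a continuous subadditive cocycle over $T$, and by the standard variational principle for such cocycles on compact dynamical systems
\[L := \lim_n\frac{1}{n}\sup_{x\in X}|\Phi_n(x)| = \max_{\mu\in\mathcal{E}_T(X)}L_\mu\]
exists, where $L_\mu$ denotes the constant $\mu$-almost-sure value of the Kingman limit $\lim_n n^{-1}|\Phi_n(x)|$.

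It remains to compute $L_\mu$ for each ergodic $\mu$. The essential observation is that, since $\log f,\log g\leq 0$ and $\beta(\log f) = \beta(\log g) = 0$, the condition $\int\log f\,d\mu = 0$ forces $\log f = 0$ $\mu$-a.e., so by continuity $f\equiv 1$ on $\supp(\mu)$ (and analogously for $g$). In case (a), if $\mu\in\mathcal{M}_{\max}(\log f)\cap\mathcal{M}_{\max}(\log g)$, then $f\equiv g\equiv 1$ throughout the $T$-invariant set $\supp(\mu)$, hence for every $x\in\supp(\mu)$ all exponential weights in the formula for $\Phi_n(x)$ equal $1$ and $\Phi_n(x)$ reduces to the Birkhoff sum $S_n\phi(x)$; Birkhoff's ergodic theorem then yields $L_\mu = |\int\phi\,d\mu|$. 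In case (b), if $\mu$ is not in the intersection, then at least one of $\int\log g\,d\mu$ or $\int\log f\,d\mu$ is strictly negative; in the first subcase the Birkhoff sums $S_k\log g(x)$ decay linearly $\mu$-a.e., so $\sum_{k\geq 0}\exp(S_k\log g(x))$ converges and $|\Phi_n(x)|$ is $\mu$-a.s.\ uniformly bounded, giving $L_\mu = 0$. The second subcase is symmetric but requires an Egorov-type argument, since the ``future'' Birkhoff sum $S_{n-1-k}\log f(T^{k+1}x)$ has a base point depending on both $k$ and $n$: one chooses a set of $\mu$-measure close to one on which the convergence in Birkhoff's theorem is uniform, and uses equidistribution of the orbit of $x$ to bound the number of indices with an atypical base point.

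Combining these cases, $\max_{\mu\in\mathcal{E}_T(X)}L_\mu$ equals $\sup\{|\int\phi\,d\mu| : \mu\in\mathcal{M}_{\max}(\log f)\cap\mathcal{M}_{\max}(\log g)\}$ when this intersection is non-empty (the supremum being attained on an ergodic element via ergodic decomposition) and equals $0$ when the intersection is empty, which yields the stated formula. The main technical obstacle is the invocation of the variational principle for continuous subadditive cocycles together with the Egorov argument needed in the $f$-side of case (b); both are standard in spirit but must be executed carefully in this general continuous setting without any further structural hypotheses on $T$.
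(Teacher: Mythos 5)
Your proposal is essentially correct and follows the same architecture as the paper: isolate the off-diagonal entry $\Phi_n$ of the triangular product, verify the subadditivity $|\Phi_{n+m}(x)|\leq|\Phi_n(x)|+|\Phi_m(T^nx)|$, apply the subadditive variational principle (the paper's Theorem~\ref{th:boris}, quoted from the appendix of \cite{Mo13}) to reduce to computing the Kingman constant $L_\mu$ for each ergodic $\mu$, and then split into cases. Your case (a) (both maximizing, $f\equiv g\equiv 1$ on the $T$-invariant set $\supp\mu$, then Birkhoff) and your first subcase of (b) ($\int\log g\,d\mu<0$, so $\sum_k\exp(S_k\log g(x))$ converges a.e.\ and $\Phi_n$ is a.e.\ bounded) coincide with the paper's treatment.

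The one genuine point of divergence is the second subcase of (b), $\int\log f\,d\mu<0$. You propose an Egorov/equidistribution argument: pick $E$ with $\mu(E)>1-\epsilon$ on which the Birkhoff convergence of $\frac1m S_m\log f$ is uniform, reindex the sum by $m=n-1-k$, and bound the contribution of the bad indices (those with $T^{n-m}x\notin E$) by counting orbit visits to $E^c$ via Birkhoff for $\chi_{E^c}$. This does work --- the estimate gives $\limsup_n\frac1n|\Phi_n(x)|\leq\|\phi\|_\infty\,\epsilon$ for a.e.\ $x$, and letting $\epsilon\to0$ along a countable sequence followed by dominated convergence and the subadditive ergodic theorem yields $L_\mu=0$ --- but you leave it as a sketch and flag it as the main technical obstacle. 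The paper sidesteps Egorov entirely by exploiting the hypothesis that $T$ is a homeomorphism: apply the Birkhoff theorem to $T^{-1}$ to show $\Phi_n(T^{-(n-1)}x)$ is $\mu$-a.e.\ bounded, deduce that $\frac1n|\Phi_n\circ T^{-(n-1)}|\to0$ a.e., transfer this to $\frac1n|\Phi_n|\to0$ in measure using $T$-invariance of $\mu$, and then combine with Kingman's a.e.\ convergence to conclude $L_\mu=0$. This is shorter and avoids the quantifier bookkeeping of Egorov; your route, on the other hand, does not use invertibility of $T$, and so would extend verbatim to continuous non-invertible $T$ (though the theorem as stated assumes a homeomorphism, so this extra generality is not exploited here).
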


\subsection{Overview of the proof of Theorem \ref{th:tech} and its relationship with earlier work}
The proof of Theorem \ref{th:tech} begins with the observation that for every $n \geq 1$ and $x \in X$, the product $A(T^{n-1}x)\cdots A(x)$ has the form
\[\begin{pmatrix} \prod_{j=0}^{n-1}f(T^jx)& \sum_{k=0}^{n-1} \left(\prod_{j=k+1}^{n-1} f(T^jx) \right)\phi(T^kx)\left(\prod_{j=0}^{k-1} g(T^jx)\right)\\ 0&\prod_{j=0}^{n-1}g(T^jx) \end{pmatrix}.\]
Since the diagonal entries necessarily belong to $(0,1]$, if the norm of this product is to grow to infinity then it must do so at the same rate as the off-diagonal term
\[\Phi_n(x):=\sum_{k=0}^{n-1} \left(\prod_{j=k+1}^{n-1} f(T^jx) \right)\phi(T^kx)\left(\prod_{j=0}^{k-1} g(T^jx)\right)\]
and therefore the problem reduces to showing that
\[\lim_{n \to \infty} \frac{1}{n}\sup_{x \in X}|\Phi_n(x)| =\sup_{\mu \in \mathcal{M}_{\max}(\log f)\cap\mathcal{M}_{\max}(\log g)} \left|\int \phi\,d\mu\right|\]
if $\mathcal{M}_{\max}(\log f)\cap\mathcal{M}_{\max}(\log f)$ is nonempty, and that the same limit is equal to zero otherwise. 

In the second named author's PhD thesis this problem was approached as follows. In order to obtain examples sufficient to prove Theorem \ref{th:main-2} it is enough to consider the case in which $f$ is the constant function, in which case we need only study the somewhat simpler expression
\begin{equation}\label{eq:v}\Psi_n(x):=\sum_{k=0}^{n-1} \phi(T^kx)\left(\prod_{j=0}^{k-1} g(T^jx)\right).\end{equation}
This expression may be seen as a weighted Birkhoff average reminiscent of those appearing in the Wiener-Wintner-type ergodic theorems found in such works as \cite{As03,SaWa07,Wa96,WiWi41} and can be studied using a modification of the strategy used in \cite{SaWa07,Wa96}. Specifically, one may re-express \eqref{eq:v} in terms of an extended dynamical system $T_g \colon X \times [0,1] \to X\times [0,1]$ defined by $T_g(x,y):=(Tx, g(x)y)$ and continuous function $\psi(x,y):=\phi(x)y$, obtaining by a simple induction
\[\Psi_n(x)=\sum_{k=0}^{n-1} \phi(T^kx)\left(\prod_{j=0}^{k-1} g(T^jx)\right)=\sum_{k=0}^{n-1}\psi(T_g^k(x,1))\]
and therefore
\[|\Psi_n(x)|=\sup_{y \in [0,1]}\left|\sum_{k=0}^{n-1}\psi(T_g^k(x,y))\right|.\]
The identity
\begin{align}\label{eq:herm}\lim_{n \to \infty}\frac{1}{n}\sup_{x \in X}\left| \Psi_n(x)\right|&=\lim_{n \to \infty} \frac{1}{n}\sup_{(x,y) \in X \times[0,1]} \left|\sum_{k=0}^{n-1} \psi(T_g^k(x,y))\right|\\\nonumber
& = \sup_{\mu \in \mathcal{M}_{T_g}(X\times [0,1])} \left|\int \psi\,d\mu\right|\end{align}
may then by obtained using appropriate results from the ergodic optimisation literature (see e.g. \cite[Proposition 2.2]{Je06}). By describing carefully the invariant measures of $T_g$ and relating them to invariant measures of $T$ a formula similar to that in Theorem \ref{th:tech} may be deduced. This approach can be developed further to allow for the possibility that $g$ takes values in $[-1,1]$, although in this case the resulting description of the limit
\[\lim_{n \to \infty} \frac{1}{n} \sup_{x \in X} \left\|A(T^{n-1}x)\cdots A(x)\right\|\]
becomes an inequality and not an equality, due to the difficulties in treating additive cancellations in \eqref{eq:v} arising from changes in the sign of $g$. When $T$ and $X$ have additional regularity properties Theorem \ref{th:tech} may also be extended to allow the condition $\sup g\leq 1$ to be removed, since by the use of results such as \cite[Theorem 4.7]{Je06} the condition $\sup g\leq 1$ can be obtained automatically from the condition $\beta(\log g)=0$ at the cost of a change of co-ordinates in $\R^2$ which depends continuously on the base point $x\in X$: see \cite[\S5]{Va22}.


In the treatment of Theorem \ref{th:tech} in this work, we will take a different approach by exploiting the fact that the functions $\Phi_n$ defined above have the subadditivity property
\[|\Phi_{n+m}| \leq |\Phi_n \circ T^m|+|\Phi_m|\]
and applying techniques from subadditive ergodic optimisation (specifically, from the appendix to \cite{Mo13}) rather than the additive techniques of \cite{Je06}. This has the advantages that it allows $f$ to be nonconstant and results in a shorter proof, but has the disadvantage that $f$ and $g$ are constrained to take values in $(0,1]$ and not in $[-1,1]$ as is the case in \cite{Va22}.

Proofs of the following standard result may be found in e.g. \cite{De75,KaWe82}.
\begin{theorem}[Subadditive ergodic theorem]
Let $T$ be an ergodic measure-preserving transformation of a probability space $(X,\mathcal{F},\mu)$ and let $(\psi_n)_{n=1}^\infty$ be a sequence of integrable functions $X \to \R$ such that $\psi_{n+m} \leq \psi_n\circ T^m + \psi_m$ a.e. for every $n,m\geq 1$. Then
\[\lim_{n \to \infty} \frac{1}{n}\psi_n(x) = \lim_{n \to \infty} \frac{1}{n} \int \psi_n\,d\mu = \inf_{n\geq 1} \frac{1}{n}\int \psi_n\,d\mu\]
for $\mu$-a.e. $x\in X$.
\end{theorem}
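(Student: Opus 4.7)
The plan is to reduce the a.e. convergence of $\psi_n/n$ to Birkhoff's additive ergodic theorem via an averaging trick over starting offsets, in the spirit of Katznelson and Weiss. The equalities among the integrals follow from Fekete's lemma: $a_n := \int \psi_n\,d\mu$ is subadditive by $T$-invariance of $\mu$, so $a_n/n \to L := \inf_{n\geq 1} a_n/n$. Setting $\underline\psi := \liminf_n \psi_n/n$ and $\overline\psi := \limsup_n \psi_n/n$, the inequality $\psi_{n+1} \leq \psi_1 + \psi_n \circ T$ divided by $n+1$ gives $\underline\psi \leq \underline\psi \circ T$ and $\overline\psi \leq \overline\psi \circ T$ a.e., which $T$-invariance of $\mu$ combined with integrability promotes to a.e. equalities; ergodicity then makes both functions a.e. constants, say $c$ and $C$ with $c \leq C$.

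The upper bound $C \leq L$ is the central computation. Fix $N \geq 1$; for each offset $k \in \{0,\ldots,N-1\}$, writing $n - k = q_k N + r_k$ with $0 \leq r_k < N$ and iterating subadditivity gives
\[\psi_n(x) \leq \psi_k(x) + \sum_{j=0}^{q_k - 1} \psi_N(T^{jN+k}x) + \psi_{r_k}(T^{q_k N + k}x).\]
Averaging this bound over $k = 0, 1, \ldots, N-1$ collapses the $N$-spaced sums into a dense Birkhoff sum of $\psi_N$ under $T$:
\[\psi_n(x) \leq \frac{1}{N}\sum_{m=0}^{n-1}\psi_N(T^m x) + R_N(x,n),\]
where the remainder $R_N$ satisfies $R_N(x,n)/n \to 0$ a.e.\ as $n \to \infty$ (using the standard consequence of Birkhoff that $\varphi(T^n x)/n \to 0$ a.e.\ for any integrable $\varphi$). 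Dividing by $n$ and invoking Birkhoff's theorem for the ergodic transformation $T$ applied to $\psi_N$ yields $\overline\psi(x) \leq a_N/N$ a.e.\ for every $N$; taking the infimum gives $C \leq L$.

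The matching lower bound $c \geq L$ is the principal obstacle, since subadditivity does not provide any pointwise lower bound on $\psi_n$. I would establish it by a stopping-time and tower argument: the first hitting time $\tau_\epsilon(x) := \min\{n \geq 1 : \psi_n(x)/n \leq L + \epsilon\}$ is a.e.\ finite by the upper bound, and partitioning a typical $T$-orbit into ``good'' blocks of length $\tau_\epsilon \circ T^{\cdot}$ on which $\psi$ is controlled, together with a Rokhlin-tower treatment of the short residual blocks, produces the matching inequality $\underline\psi \geq L - O(\epsilon)$ a.e. Letting $\epsilon \to 0$ then yields $c = L$, and this is the technically subtlest step, forming the heart of the Derriennic and Katznelson-Weiss arguments.
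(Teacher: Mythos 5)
The paper states this result without proof, citing Derriennic and Katznelson--Weiss, so there is no internal argument to compare against; I therefore assess your sketch on its own merits. Your first half is sound: Fekete's lemma for $a_n=\int\psi_n\,d\mu$, $T$-invariance of $\underline\psi$ and $\overline\psi$ (hence a.e.\ constancy $c\le C$ by ergodicity), and the averaging-over-offsets bound $C\le L$ are all correctly argued. (One minor note: the promotion of $\underline\psi\le\underline\psi\circ T$ to a.e.\ equality does not rely on integrability of $\underline\psi$ --- which may well fail --- but only on the measure-theoretic fact that a $\mu$-preserving map cannot strictly increase a measurable function on a positive-measure set.)

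There is, however, a genuine gap in your treatment of the lower bound. You define $\tau_\epsilon(x)=\min\{n\ge1:\psi_n(x)/n\le L+\epsilon\}$ and assert that the block decomposition ``produces the matching inequality $\underline\psi\ge L-O(\epsilon)$ a.e.'' This cannot work as stated, for two reasons. First, subadditivity can only ever yield \emph{upper} bounds on $\psi_n$: the $\tau_\epsilon$-block decomposition gives $\psi_n(x)\le n(L+\epsilon)+o(n)$, and no pointwise lower bound on $\psi_n$ can be squeezed out of it. The actual Katznelson--Weiss mechanism goes through integration: one integrates the block upper bound against $\mu$ and compares with $a_n$. Second, and more fundamentally, using $L$ in the definition of $\tau_\epsilon$ renders even that integration vacuous: one obtains $a_n\le n(L+\epsilon)+o(n)$, hence only $L\le L+\epsilon$. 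The stopping time must instead be defined via the (as-yet-unidentified) pointwise liminf, $\tau_\epsilon(x)=\min\{n\ge1:\psi_n(x)/n\le c+\epsilon\}$, which is a.e.\ finite precisely because $\liminf_n\psi_n(x)/n=c$ a.e. The truncated block decomposition (cutting $\tau_\epsilon$ off at a large $N$, with the small set $\{\tau_\epsilon>N\}$ contributing negligibly after integration) then yields $a_n\le n(c+\epsilon)+o(n)$, whence $L\le c+\epsilon$ and, letting $\epsilon\to0$, the chain $L\le c\le C\le L$ closes. Finally, the appeal to Rokhlin towers is out of place: the decomposition of $\{0,\dots,n-1\}$ into stopping-time blocks is purely combinatorial along each orbit and requires no tower construction.
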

We also require the following subadditive analogue of \eqref{eq:herm} which may be found in the appendix to \cite{Mo13}. For some closely-related earlier results see also \cite{Sc98,StSt00}.
\begin{theorem}\label{th:boris}
Let $T \colon X \to X$ be a continuous transformation of a compact metric space and let $(\psi_n)_{n=1}^\infty$ be a sequence of continuous functions $X \to \R$ such that $\psi_{n+m}(x)\leq \psi_m(T^nx)+\psi_n(x)$ for every $x \in X$ and $n,m\geq 1$. Then
\begin{align*}\inf_{n \geq 1} \sup_{\mu \in \mathcal{M}_T(X)} \frac{1}{n}\int \psi_n\,d\mu &=\sup_{\mu \in \mathcal{M}_T(X)}\inf_{n \geq 1}  \frac{1}{n}\int \psi_n\,d\mu\\
&=  \inf_{n\geq 1}\sup_{x \in X} \frac{1}{n}\psi_n(x) = \sup_{x \in X}\inf_{n \geq 1} \frac{1}{n}\psi_n(x).\end{align*}
In the first three expressions the infimum over $n \geq 1$ is equal to the limit as $n \to \infty$ of the same expression. In all cases, every supremum over $\mu \in \mathcal{M}_T(X)$ is equal to the corresponding supremum over $\mu \in \mathcal{E}_T(X)$ and is attained by an element of $\mathcal{E}_T(X)$.
\end{theorem}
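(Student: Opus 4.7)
Write $\alpha_1:=\inf_n\sup_\mu\tfrac{1}{n}\int\psi_n\,d\mu$, $\alpha_2:=\sup_\mu\inf_n\tfrac{1}{n}\int\psi_n\,d\mu$, $\alpha_3:=\inf_n\sup_x\tfrac{1}{n}\psi_n(x)$, and $\alpha_4:=\sup_x\inf_n\tfrac{1}{n}\psi_n(x)$ for the four quantities in the displayed chain. The plan is to derive $\alpha_2\le\alpha_1\le\alpha_3\le\alpha_2$ together with $\alpha_4\le\alpha_3\le\alpha_4$, so that all four coincide, and separately to show attainment of the supremum defining $\alpha_2$ at an ergodic measure. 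Since $\psi_{n+m}\le\psi_n+\psi_m\circ T^n$ pointwise, both $n\mapsto\sup_x\psi_n(x)$ and (for each $\mu\in\mathcal M_T(X)$) $n\mapsto\int\psi_n\,d\mu$ are subadditive, the latter using $T$-invariance of $\mu$ to rewrite $\int\psi_m\circ T^n\,d\mu=\int\psi_m\,d\mu$; Fekete's lemma then converts the three relevant infima to limits. The bounds $\alpha_2\le\alpha_1$, $\alpha_1\le\alpha_3$, and $\alpha_4\le\alpha_3$ are immediate from, respectively, the minimax inequality, the bound $\int\psi_n\,d\mu\le\sup_x\psi_n(x)$, and $\inf_m\tfrac{1}{m}\psi_m(x)\le\tfrac{1}{n}\sup_x\psi_n(x)$.

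The substantive inequality $\alpha_3\le\alpha_2$ (a variational principle for subadditive potentials) is proved by an empirical-measure construction. Pick $x_n\in X$ attaining $\psi_n(x_n)=\sup_x\psi_n(x)\ge n\alpha_3$, set $\mu_n:=\tfrac{1}{n}\sum_{i=0}^{n-1}\delta_{T^i x_n}$, and pass to a weak-$*$ subsequential limit $\mu$, which is $T$-invariant by the standard Krylov--Bogolyubov argument. Fix $k\ge1$, and for each offset $j\in\{0,\ldots,k-1\}$ iterate the subadditive inequality along the decomposition of $\{0,\ldots,n-1\}$ into length-$k$ blocks beginning at position $j$; averaging the $k$ resulting estimates packs the interior length-$k$ contributions into a single Birkhoff-type sum and yields
\[k\,\psi_n(x_n)\le\sum_{\ell=0}^{n-k}\psi_k(T^\ell x_n)+O_k(1),\]
with an $O_k(1)$ term depending only on $k$ and on the sup-norms of $\psi_1,\ldots,\psi_k$. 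Dividing by $n$ and letting $n\to\infty$ along the convergent subsequence gives $k\alpha_3\le\int\psi_k\,d\mu$ for every $k$, hence $\alpha_3\le\alpha_2$. For attainment, the functional $F(\mu):=\lim_n\tfrac{1}{n}\int\psi_n\,d\mu$ is upper semicontinuous on the compact set $\mathcal M_T(X)$ as an infimum of weak-$*$ continuous affine functions, and so attains its supremum at some $\mu^*$; applying the subadditive ergodic theorem to the ergodic components in the decomposition $\mu^*=\int\mu^*_y\,d\mu^*(y)$ gives the identity $F(\mu^*)=\int F(\mu^*_y)\,d\mu^*(y)$, which forces $\mu^*$-almost every ergodic component $\mu^*_y$ also to be a maximiser.

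The main obstacle is the remaining inequality $\alpha_3\le\alpha_4$. Given an ergodic maximiser $\mu$, the subadditive ergodic theorem yields $\tfrac{1}{n}\psi_n(x)\to\alpha_3$ for $\mu$-a.e.\ $x$, but this controls only $\liminf_n\tfrac{1}{n}\psi_n(x)$ and is a priori strictly weaker than the required $\inf_n\tfrac{1}{n}\psi_n(x)\ge\alpha_3$, since a few small values of $n$ can depress the pointwise infimum. My plan is to combine this a.e.\ convergence with a shift along orbits: the cocycle bound $\psi_n(T^Mx)\ge\psi_{M+n}(x)-\psi_M(x)$, together with the a.e.\ convergence, controls $\psi_n(T^Mx)/n$ from below for $n$ large relative to $M$, while Birkhoff's theorem applied simultaneously to the finitely many continuous functions $\psi_1,\ldots,\psi_N$ (each with $\mu$-average at least $n\alpha_3$) should force $M$ to visit a set on which the remaining small-$n$ conditions $\psi_n(T^Mx)\ge n(\alpha_3-\varepsilon)$ all hold. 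A compactness argument extracts $T^{M_k}x\to x^*$ with $\inf_n\tfrac{1}{n}\psi_n(x^*)\ge\alpha_3-\varepsilon$, after which letting $\varepsilon\to 0$ and using upper semicontinuity of $y\mapsto\inf_n\tfrac{1}{n}\psi_n(y)$ on the compact space $X$ gives $\alpha_4\ge\alpha_3$. The principal technical challenge, where I expect most of the real work to lie, is verifying that the intersection of the finitely many ``good'' positive-measure sets corresponding to different values of $n$ is itself of positive $\mu$-measure, which should require a careful iterative or diagonal application of Birkhoff's theorem.
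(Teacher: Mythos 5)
The paper does not actually prove this theorem---it is quoted from the appendix of [Mo13]---so your attempt can only be measured against that cited argument and against correctness. Most of your proposal is sound: the easy inequalities, the proof of $\alpha_3\le\alpha_2$ by maximising points, empirical measures and averaging over the $k$ block-offsets, and the attainment of the supremum in $\alpha_2$ at an ergodic measure via upper semicontinuity of $\mu\mapsto\inf_n\frac1n\int\psi_n\,d\mu$ plus the ergodic decomposition and Kingman's theorem, are all correct and standard.

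The genuine gap is exactly where you flag it, and the tool you propose there does not suffice. Knowing $\int\psi_n\,d\mu\ge n\alpha_3$ for $n\le N$, Birkhoff's theorem only tells you that each good set $G_n:=\{y:\psi_n(y)\ge n(\alpha_3-\varepsilon)\}$ has positive $\mu$-measure; but $\psi_n$ may exceed $n\alpha_3$ hugely on a tiny set, so $\mu(G_n)$ can be arbitrarily small (the bad sets can have measure close to $1$), and finitely many sets of small positive measure need not intersect. No iterative or diagonal use of Birkhoff applied to $\psi_1,\ldots,\psi_N$ will by itself produce a single time $M$ with $T^Mx\in\bigcap_{n\le N}G_n$. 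What closes the gap is the same block/stopping-time decomposition you already used for $\alpha_3\le\alpha_2$: if every point of a long orbit segment fails some condition of index $n\le N$, decompose $\{0,\ldots,M-1\}$ greedily into such bad blocks (plus a boundary block of length less than $N$) and apply subadditivity to get $\psi_M(x)\le(\alpha_3-\varepsilon)M+C_{N}$ with $C_N$ independent of $M$; this contradicts either $\frac1M\psi_M(x)\to\alpha_3$ $\mu$-a.e.\ (Kingman for an ergodic maximiser) or, more simply, $\sup_x\psi_M(x)\ge M\alpha_3$ if you run the argument along orbits of points maximising $\psi_M$---in which case no measure theory is needed at all. Either way one obtains, for every $\varepsilon>0$ and every $N$, a point $y$ with $\psi_n(y)\ge n(\alpha_3-\varepsilon)$ for all $n\le N$; compactness of $X$, continuity of each $\psi_n$, and upper semicontinuity of $x\mapsto\inf_n\frac1n\psi_n(x)$ then give $\alpha_4\ge\alpha_3$ as you intended. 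Your cocycle-shift estimate $\psi_n(T^Mx)\ge\psi_{M+n}(x)-\psi_M(x)$ genuinely only controls $n$ large relative to $M$, so it cannot substitute for this step; the block argument is the missing ingredient.
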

We now proceed with the proof of Theorem \ref{th:tech} and thence that of Theorem \ref{th:main-2}.

\subsection{Proof of Theorem \ref{th:tech}}
Consider the sequence of continuous functions $\Phi_n \colon X \to \R$ defined by
\[\Phi_n(x):=\sum_{j=0}^{n-1} f(T^{n-1}x)\cdots f(T^{j+1}x) \phi(T^jx) g(T^{j-1}x)\cdots g(x). \]
We note that the relation
\[\Phi_{n+m}(x) = \Phi_m(T^nx) g(T^{n-1}x)\cdots g(x) + f(T^{n+m-1}x)\cdots f(T^nx)\Phi_n(x)\]
is satisfied for every $x\in X$ and $n,m \geq 1$, and consequently
\[|\Phi_{n+m}(x)| \leq |\Phi_m(T^nx)|+|\Phi_n(x)|\]
for all such $n$, $m$ and $x$. It follows that Theorem \ref{th:boris} is applicable to the sequence of functions $|\Phi_n|$, and therefore
\[\lim_{n \to \infty} \frac{1}{n} \sup_{x \in X} |\Phi_n(x)|= \sup_{\mu \in \mathcal{E}_T(X)} \inf_{n \geq 1} \frac{1}{n}\int |\Phi_n|\,d\mu.\]
Now, for every $x \in X$ and $n \geq 1$ we have
\[A(T^{n-1}x) \cdots A(x)=\begin{pmatrix} \prod_{j=0}^{n-1}f(T^jx)& \Phi_n(x)\\ 0&\prod_{j=0}^{n-1}g(T^jx) \end{pmatrix}\]
and so in particular 
\begin{align*}\MoveEqLeft[5]{\left|\left\|A(T^{n-1}x)\cdots A(x)\right\|-|\Phi_n(x)|\right|}&\\
&=\left|\left\|\begin{pmatrix} \prod_{j=0}^{n-1}f(T^jx)& \Phi_n(x)\\ 0&\prod_{j=0}^{n-1}g(T^jx) \end{pmatrix}\right\| -\left\|\begin{pmatrix} 0& \Phi_n(x)\\ 0&0\end{pmatrix}\right\| \right|\\
& \leq \left\|\begin{pmatrix} \prod_{j=0}^{n-1}f(T^jx)& 0\\ 0&\prod_{j=0}^{n-1}g(T^jx) \end{pmatrix}\right\| \\
&=\max\left\{\left| \prod_{j=0}^{n-1}f(T^jx)\right|, \left| \prod_{j=0}^{n-1}g(T^jx)\right|\right\}\leq 1 \end{align*}
by the reverse triangle inequality.  Consequently
\[\lim_{n \to \infty} \frac{1}{n} \sup_{x \in X}\left\| A(T^{n-1}x)\cdots A(x)\right\|=\lim_{n \to \infty} \frac{1}{n} \sup_{x \in X} \left|\Phi_n(x)\right|= \sup_{\mu \in \mathcal{E}_T(X)} \inf_{n \geq 1} \frac{1}{n}\int |\Phi_n|\,d\mu\]
and to prove the theorem we will evaluate the latter. We consider in turn the cases where $\mu \in \mathcal{E}_T(X)$ fails to belong to $\mathcal{M}_{\max}(\log g)$, fails to belong to $\mathcal{M}_{\max}(\log f)$, or belongs to both sets.

If $\mu\in\mathcal{E}_T(X)$ does not belong to $\mathcal{M}_{\max}(\log g)$ then for $\mu$-a.e. $x \in X$
\[\lim_{k \to \infty} \frac{1}{k} \log \left(g(T^{k-1}x)\cdots g(x)\right)=\lim_{k \to \infty} \frac{1}{k}\sum_{j=0}^{k-1} \log g(T^jx) = \int \log g\,d\mu<\beta(\log g)=0\]
and hence for $\mu$-a.e. $x \in X$
\begin{align*}|\Phi_n(x)|&=\left| \sum_{k=0}^{n-1} f(T^{n-1}x)\cdots f(T^{k+1}x) \phi(T^kx) g(T^{k-1}x)\cdots g(x)\right|\\
&\leq \sum_{k=0}^{n-1} \left|\phi(T^kx)\right| \cdot g(T^{k-1}x)\cdots g(x)\\
& \leq \|\phi\|_\infty\sum_{k=0}^\infty g(T^{k-1}x) \cdots g(x)<\infty\end{align*}
for every $n \geq 1$. It follows that $ \frac{1}{n}|\Phi_n|\to0$ $\mu$-a.e. and hence by the subadditive ergodic theorem
\[\lim_{n \to \infty} \frac{1}{n} \int |\Phi_n|\,d\mu = \inf_{n \geq1} \frac{1}{n} \int |\Phi_n|\,d\mu=0.\]
Now suppose instead that $\mu\in\mathcal{E}_T(X)$ does not belong to $\mathcal{M}_{\max}(\log f)$. Since $\mu$ is $T$-invariant it is also $T^{-1}$-invariant, so by the Birkhoff ergodic theorem applied to $T^{-1}$ we likewise have
\[\lim_{\ell \to \infty} \frac{1}{\ell} \log f(x)f(T^{-1}x)\cdots f(T^{-(\ell-1)}x)=\int \log f\,d\mu<\beta(\log f)=0\]
and hence for $\mu$-a.e. $x \in X$
\begin{align*}\left|\Phi_n(T^{-(n-1)}x)\right|&=\left| \sum_{k=0}^{n-1} f(x)\cdots f(T^{k+2-n}x) \phi(T^{k+1-n}x) g(T^{k-n}x)\cdots g(T^{-(n-1)}x)\right|\\
&=\left| \sum_{\ell=0}^{n-1} f(x)\cdots f(T^{-(\ell-1)}x) \phi(T^{-\ell}x) g(T^{-\ell-1}x)\cdots g(T^{-(n-1)}x)\right|\\
&\leq  \sum_{\ell=1}^{n} f(x)\cdots f(T^{-(\ell-1)}x)\cdot\left|\phi(T^{-\ell}x)\right|\\
&\leq \|\phi\|_\infty \sum_{\ell=1}^\infty  f(x)\cdots f(T^{-(\ell-1)}x)<\infty\end{align*}
for every $n \geq 1$. It follows that for $\mu$-a.e. $x\in X$
\[\lim_{n \to \infty} \frac{1}{n}\left|\Phi_n\left(T^{-(n-1)}x\right)\right|=0\]
and hence in particular $\frac{1}{n}|\Phi_n \circ T^{-(n-1)}| \to 0$ in the sense of convergence in measure; but since $\mu$ is $T$-invariant, this implies that $\frac{1}{n} |\Phi_n| \to 0$ in the sense of convergence in measure. In combination with the subadditive ergodic theorem this fact yields
\[\lim_{n \to \infty} \frac{1}{n} \int |\Phi_n|\,d\mu = \inf_{n \geq1} \frac{1}{n} \int |\Phi_n|\,d\mu=0.\]
Combining the two facts just demonstrated, we have shown that if $\mu\in\mathcal{E}_T(X)$ does not belong to $\mathcal{M}_{\max}(\log f) \cap \mathcal{M}_{\max}(\log g)$ then
\[\inf_{n \geq 1} \frac{1}{n} \int |\Phi_n|\,d\mu=\lim_{n \to \infty} \frac{1}{n} \int |\Phi_n|\,d\mu = 0.\]
If $\mathcal{M}_{\max}(\log f) \cap \mathcal{M}_{\max}(\log g)$ is empty, this demonstrates that
\[ \sup_{\mu \in \mathcal{E}_T(X)}\inf_{n \geq 1}\frac{1}{n} \int |\Phi_n|\,d\mu =0\]
which completes the proof of the theorem in that case. Otherwise, suppose that $\mathcal{M}_{\max}(\log f) \cap \mathcal{M}_{\max}(\log g)$ is nonempty. Since by hypothesis $0= \beta( \log f)\leq \sup \log f \leq 0$ and $0=\beta(\log g) \leq  \sup \log g \leq 0$ it is easily seen that the set
\[Z:= \left\{x \in X \colon \log f(x)=\log g(x)=0\right\}=\left\{x \in X \colon f(x)=g(x)=1\right\}\]
satisfies $\mu(Z)=1$ for every $\mu \in \mathcal{M}_{\max}(\log f)\cap \mathcal{M}_{\max}(\log g)$ and in particular is nonempty. Moreover it is easily verified that
\[\mathcal{M}_{\max}(\log f)\cap \mathcal{M}_{\max}(\log g) = \left\{ \mu \in \mathcal{M}_T(X) \colon \mu(Z)=1\right\}=\mathcal{M}_T(Z)\]
and therefore
\[\mathcal{M}_{\max}(\log f)\cap \mathcal{M}_{\max}(\log g)\cap \mathcal{E}_T(X) = \mathcal{M}_T(Z)\cap\mathcal{E}_T(X)=\mathcal{E}_T(Z).\]
Now, if $\mu \in \mathcal{M}_{\max}(\log f) \cap \mathcal{M}_{\max}(\log g)\cap \mathcal{E}_T(X)= \mathcal{E}_T(Z)$ then
for $\mu$-a.e. $x \in X$ we simply have
\[\lim_{n \to \infty} \frac{1}{n} |\Phi_n(x)| = \lim_{n \to \infty} \frac{1}{n} \left|\sum_{k=0}^{n-1} \phi(T^kx)\right| = \left|\int \phi\,d\mu\right|\]
using the Birkhoff ergodic theorem and the fact that $f$ and $g$ are identically equal to $1$ on $Z$. Thus
\[\sup_{\mu \in \mathcal{M}_{\max}(\log f)\cap\mathcal{M}_{\max}(\log g)\cap  \mathcal{E}_T(X)}\left|\int \phi\,d\mu\right|\\
=\sup_{\mu \in \mathcal{E}_T(Z)}\left|\int \phi\,d\mu\right|.\]
 Since we have already established that
\[\sup_{\mu \in \mathcal{E}_T(X) \setminus (\mathcal{M}_{\max}(\log f)\cap\mathcal{M}_{\max}(\log g))} \inf_{n \geq 1} \frac{ 1}{n} \int |\Phi_n|\,d\mu=0\]
 it follows that
\begin{align*}
\sup_{\mu \in \mathcal{E}_T(X)} \inf_{n \geq 1}\frac{1}{n} \int |\Phi_n|\,d\mu &=\sup_{\mu \in \mathcal{M}_{\max}(\log f)\cap\mathcal{M}_{\max}(\log g)\cap  \mathcal{E}_T(X)}\left|\int \phi\,d\mu\right|\\
&=\sup_{\mu \in \mathcal{E}_T(Z)}\left|\int \phi\,d\mu\right|\\
&=\sup_{\mu \in \mathcal{M}_T(Z)}\left|\int \phi\,d\mu\right|\\
&=\sup_{\mu \in \mathcal{M}_{\max}(\log f)\cap\mathcal{M}_{\max}(\log g)}\left|\int \phi\,d\mu\right|\end{align*}
as required. The proof of the theorem is complete.

 \subsection{Proof of Theorem \ref{th:main-2}}
Fix a metric $d$ on $\Sigma_2$ which generates the infinite product topology. Let $[1]\subset \Sigma_2$ denote the set of all $(x_n)_{n\in \Z} \in \Sigma_2$ such that $x_0=1$, which by the definition of the infinite product topology on $\Sigma_2$ is both closed and open. By an appropriate version of the Jewett-Krieger Theorem (see for example \cite[\S29]{DeGrSi76}), or by various direct constructions such as \cite{DeKe78,Gr73,GrSh76}, there exists a compact $T$-invariant set $Z \subset \Sigma_2$ with the following properties: there exists a unique $\nu \in \mathcal{M}_T$ such that $\nu(Z)=1$; $T$ is weak-mixing with respect to this unique measure $\nu$; the support of $\nu$ is precisely $Z$; and $Z$ is not a singleton set. Since $Z$ is not a singleton we have $0<\nu([1])<1$ and therefore $e^{2\pi i \nu([1])} \neq 1$, a property which will be significant later. Define $f(x)=g(x)=e^{-\dist(x,Z)}$ for all $x \in \Sigma_2$, where $\dist(x,Z):=\inf_{y \in Z}d(x,y)$. Clearly $f$ and $g$ are Lipschitz continuous and satisfy $\beta(\log f)=\beta(\log g)=0$ and $\mathcal{M}_{\max}(\log f)=\mathcal{M}_{\max}(\log g)=\{\nu\}$. Define also $\phi(x):=\chi_{[1]}(x)-\nu([1])$. By the definition of the infinite product topology, $\phi \colon \Sigma_2 \to \R$ is continuous; since $\Sigma_2$ is a compact metric space with respect to $d$, $\phi$ is uniformly continuous with respect to $d$; and since $\phi$ takes exactly two values, this implies that $\phi$ is Lipschitz continuous with respect to $d$ as required. Define $A \colon\Sigma_2\to \GL_2(\R)$ as in the statement of the theorem. Clearly  Theorem \ref{th:tech} is applicable. Since
\[\sup_{\mu \in \mathcal{M}_{\max}(\log f)\cap \mathcal{M}_{\max}(\log g)} \left|\int \phi\,d\mu\right|=\left|\int \phi\,d\nu\right|=\left|\nu([1])-\nu([1])\right|=0,\]
Theorem \ref{th:tech} yields 
\begin{equation}\label{eq:this}\lim_{n \to \infty} \frac{1}{n} \sup_{x \in \Sigma_2} \left\|A(T^{n-1}x)\cdots A(x)\right\| =0.\end{equation}
Suppose for a contradiction that 
\[\sup_{n \geq 1} \sup_{x \in \Sigma_2} \left\|A(T^{n-1}x)\cdots A(x)\right\|<\infty,\]
in which case since $f \equiv g \equiv 1$ on $Z$,
\begin{align*}\sup_{n \geq 1} \sup_{x \in Z} \left|\sum_{j=0}^{n-1} \phi(T^jx) \right|
&\leq \sup_{n \geq 1} \sup_{x \in Z}\left\|\begin{pmatrix} 1& \sum_{j=0}^{n-1} \phi(T^jx) \\0&1\end{pmatrix}\right\|\\
&= \sup_{n \geq 1} \sup_{x \in Z}\left\|\begin{pmatrix} 1& \phi(T^{n-1}x) \\0&1\end{pmatrix}\cdots \begin{pmatrix} 1& \phi(Tx) \\0&1\end{pmatrix} \begin{pmatrix} 1& \phi(x) \\0&1\end{pmatrix} \right\|\\
&= \sup_{n \geq 1}\sup_{x \in Z}  \left\|A(T^{n-1}x)\cdots A(x)\right\|\\
&\leq \sup_{n \geq 1} \sup_{x \in \Sigma_2} \left\|A(T^{n-1}x)\cdots A(x)\right\|<\infty.\end{align*}
We borrow an argument of Hal\'asz \cite{Ha76} to deduce that $T$ cannot be weak mixing with respect to $\nu$, giving us the required contradiction. 
In view of the above bound we may define a bounded Borel measurable function $\psi \colon Z \to \R$ by $\psi(x):=\limsup_{n \to\infty} \sum_{j=0}^{n-1}\phi(T^jx)$. Clearly $\psi(x) =\phi(x)+\psi(Tx)$ for every $x\in Z$, so
\[e^{2\pi i\psi(x)}=e^{2\pi i\phi(x)}e^{2\pi i\psi(Tx)}=e^{2\pi i \chi_{[1]}(x)}e^{-2\pi i\nu([1])}e^{2\pi i\psi(Tx)}=e^{-2\pi i\nu([1])}e^{2\pi i\psi(Tx)}\]
for every $x \in Z$, where we have used the fact that the function $\chi_{[1]}$ takes only integer values. In particular $e^{2\pi i \psi}\circ T=e^{2\pi i\nu([1])}e^{2\pi i \psi}$ $\nu$-a.e, so $e^{2\pi i \psi}$ is an eigenfunction of the composition operator $h \mapsto h \circ T$ on $L^2(\nu)$ with eigenvalue $e^{2\pi i \nu([1])} \neq 1$, which contradicts the fact that $T$ is weak-mixing with respect to $\nu$. We have obtained the desired contradiction and deduce that necessarily
\begin{equation}\label{eq:that}\sup_{n \geq 1} \sup_{x \in \Sigma_2} \left\|A(T^{n-1}x)\cdots A(x)\right\|=\infty.\end{equation}
The limit
\[\lim_{n \to \infty} \frac{1}{n}\log \sup_{x \in \Sigma_2} \left\|A(T^{n-1}x)\cdots A(x)\right\|\]
clearly exists by subadditivity. In view of \eqref{eq:this} this limit cannot be strictly greater than zero, and  in view of \eqref{eq:that} it cannot be strictly less than zero. It is therefore zero, which is to say that
\[\lim_{n \to \infty}  \sup_{x \in \Sigma_2} \left\|A(T^{n-1}x)\cdots A(x)\right\|^{\frac{1}{n}}=1\]
as required by the statement of the theorem. The proof of the theorem is complete.
\section{Acknowledgements}

Versions of Theorems \ref{th:guze}, \ref{th:main-2} and \ref{th:tech}, and of Proposition \ref{pr:ymm}, previously appeared in the second named author's PhD thesis \cite{Va22}. J. Varney was supported by EPRSC Doctoral Training Partnership grant EP/R513350/1. I.D. Morris was partially supported by Leverhulme Trust Research Project Grant RPG-2016-194.

\bibliographystyle{acm}
\bibliography{vsb}

\begin{thebibliography}{10}

\bibitem{As03}
{\sc Assani, I.}
\newblock {\em Wiener {W}intner ergodic theorems}.
\newblock World Scientific Publishing Co., Inc., River Edge, NJ, 2003.

\bibitem{BeCoHa14}
{\sc Bell, J.~P., Coons, M., and Hare, K.~G.}
\newblock The minimal growth of a {$k$}-regular sequence.
\newblock {\em Bull. Aust. Math. Soc. 90}, 2 (2014), 195--203.

\bibitem{BeCoHa16}
{\sc Bell, J.~P., Coons, M., and Hare, K.~G.}
\newblock Growth degree classification for finitely generated semigroups of
  integer matrices.
\newblock {\em Semigroup Forum 92}, 1 (2016), 23--44.

\bibitem{BoGa19}
{\sc Bochi, J., and Garibaldi, E.}
\newblock Extremal norms for fiber-bunched cocycles.
\newblock {\em J. \'{E}c. polytech. Math. 6\/} (2019), 947--1004.

\bibitem{BoZh16}
{\sc Bochi, J., and Zhang, Y.}
\newblock Ergodic optimization of prevalent super-continuous functions.
\newblock {\em Int. Math. Res. Not. IMRN}, 19 (2016), 5988--6017.

\bibitem{ChMaSi12}
{\sc Chitour, Y., Mason, P., and Sigalotti, M.}
\newblock On the marginal instability of linear switched systems.
\newblock {\em Systems Control Lett. 61}, 6 (2012), 747--757.

\bibitem{DeKe78}
{\sc Dekking, F.~M., and Keane, M.}
\newblock Mixing properties of substitutions.
\newblock {\em Z. Wahrscheinlichkeitstheorie und Verw. Gebiete 42}, 1 (1978),
  23--33.

\bibitem{DeGrSi76}
{\sc Denker, M., Grillenberger, C., and Sigmund, K.}
\newblock {\em Ergodic theory on compact spaces}.
\newblock Lecture Notes in Mathematics, Vol. 527. Springer-Verlag, Berlin-New
  York, 1976.

\bibitem{De75}
{\sc Derriennic, Y.}
\newblock Sur le th\'{e}or\`eme ergodique sous-additif.
\newblock {\em C. R. Acad. Sci. Paris S\'{e}r. A-B 281}, 22 (1975), Aii,
  A985--A988.

\bibitem{Gr73}
{\sc Grillenberger, C.}
\newblock Constructions of strictly ergodic systems, {II}: {$K$}-systems.
\newblock {\em Z. Wahrscheinlichkeitstheorie und Verw. Gebiete 25\/} (1972/73),
  335--342.

\bibitem{GrSh76}
{\sc Grillenberger, C., and Shields, P.}
\newblock Construction of strictly ergodic systems, {III}: {B}ernoulli systems.
\newblock {\em Z. Wahrscheinlichkeitstheorie und Verw. Gebiete 33}, 3
  (1975/76), 215--217.

\bibitem{GuZe01}
{\sc Guglielmi, N., and Zennaro, M.}
\newblock On the asymptotic properties of a family of matrices.
\newblock {\em Linear Algebra Appl. 322}, 1-3 (2001), 169--192.

\bibitem{Ha76}
{\sc Hal\'{a}sz, G.}
\newblock Remarks on the remainder in {B}irkhoff's ergodic theorem.
\newblock {\em Acta Math. Acad. Sci. Hungar. 28}, 3-4 (1976), 389--395.

\bibitem{Je06}
{\sc Jenkinson, O.}
\newblock Ergodic optimization.
\newblock {\em Discrete Contin. Dyn. Syst. 15}, 1 (2006), 197--224.

\bibitem{Ju09}
{\sc Jungers, R.}
\newblock {\em The joint spectral radius: theory and applications}, vol.~385 of
  {\em Lecture Notes in Control and Information Sciences}.
\newblock Springer-Verlag, Berlin, 2009.

\bibitem{JuPrBl08}
{\sc Jungers, R.~M., Protasov, V., and Blondel, V.~D.}
\newblock Efficient algorithms for deciding the type of growth of products of
  integer matrices.
\newblock {\em Linear Algebra Appl. 428}, 10 (2008), 2296--2311.

\bibitem{KaWe82}
{\sc Katznelson, Y., and Weiss, B.}
\newblock A simple proof of some ergodic theorems.
\newblock {\em Israel J. Math. 42}, 4 (1982), 291--296.

\bibitem{Mo13}
{\sc Morris, I.~D.}
\newblock Mather sets for sequences of matrices and applications to the study
  of joint spectral radii.
\newblock {\em Proc. Lond. Math. Soc. (3) 107}, 1 (2013), 121--150.

\bibitem{Mo22}
{\sc Morris, I.~D.}
\newblock Marginally unstable discrete-time linear switched systems with highly
  irregular trajectory growth.
\newblock {\em Systems Control Lett. 163\/} (2022), 105216.

\bibitem{PrJu15}
{\sc Protasov, V.~Y., and Jungers, R.~M.}
\newblock Resonance and marginal instability of switching systems.
\newblock {\em Nonlinear Anal. Hybrid Syst. 17\/} (2015), 81--93.

\bibitem{QuSi12}
{\sc Quas, A., and Siefken, J.}
\newblock Ergodic optimization of super-continuous functions on shift spaces.
\newblock {\em Ergodic Theory Dynam. Systems 32}, 6 (2012), 2071--2082.

\bibitem{SaWa07}
{\sc Santos, S.~I., and Walkden, C.}
\newblock Topological {W}iener-{W}intner ergodic theorems via non-abelian {L}ie
  group extensions.
\newblock {\em Ergodic Theory Dynam. Systems 27}, 5 (2007), 1633--1650.

\bibitem{Sc98}
{\sc Schreiber, S.~J.}
\newblock On growth rates of subadditive functions for semiflows.
\newblock {\em J. Differential Equations 148}, 2 (1998), 334--350.

\bibitem{StSt00}
{\sc Sturman, R., and Stark, J.}
\newblock Semi-uniform ergodic theorems and applications to forced systems.
\newblock {\em Nonlinearity 13}, 1 (2000), 113--143.

\bibitem{Su08}
{\sc Sun, Z.}
\newblock A note on marginal stability of switched systems.
\newblock {\em IEEE Trans. Automat. Control 53}, 2 (2008), 625--631.

\bibitem{Va22}
{\sc Varney, J.}
\newblock {\em Marginal instability of matrix systems}.
\newblock PhD thesis, University of Surrey, Guildford, U.K., 2022.

\bibitem{VaMo22}
{\sc Varney, J., and Morris, I.~D.}
\newblock On marginal growth rates of matrix products.
\newblock Preprint: arXiv:2209.00449, 2022.

\bibitem{Wa96}
{\sc Walters, P.}
\newblock Topological {W}iener-{W}intner ergodic theorems and a random {$L^2$}
  ergodic theorem.
\newblock {\em Ergodic Theory Dynam. Systems 16}, 1 (1996), 179--206.

\bibitem{WiWi41}
{\sc Wiener, N., and Wintner, A.}
\newblock Harmonic analysis and ergodic theory.
\newblock {\em Amer. J. Math. 63\/} (1941), 415--426.

\end{thebibliography}

\end{document}